\newtheorem{theorem}{Theorem}
\newtheorem{lemma}[theorem]{Lemma}
\newtheorem{proposition}[theorem]{Proposition}
\newcommand{\inst}[1]{$^{#1}$}
\newcommand{\set}[1]{\ensuremath{\left\{#1 \right\}}}
\newcommand{\total}{\ensuremath{16} }
\title{Improved bound on facial parity edge coloring}
\author
{
	Borut Lu\v{z}ar\inst{1}\thanks{Operation financed in part by the European Union, European Social Fund.},
	Riste \v{S}krekovski\inst{2}\thanks{Supported in part by ARRS Research Program P1-0297.}
}
\begin{document}
\maketitle

\begin{center}
{\footnotesize
	\inst{1} Institute of Mathematics, Physics, and Mechanics,\\
         Jadranska~19, 1111 Ljubljana, Slovenia\\
         E-Mail: borut.luzar@gmail.com\\
         \quad\\
	\inst{2} Department of Mathematics, University of Ljubljana\\
         Jadranska~21, 1111 Ljubljana, Slovenia\\
         E-Mail: skrekovski@gmail.com\\         
} \end{center}
\quad\\
\abstract{A \textit{facial parity edge coloring} of a $2$-edge connected plane graph is an edge coloring where 
			no two consecutive edges of a facial walk of any face receive the same color. Additionally, for every face
			$f$ and every color $c$ either no edge or an odd number of edges incident to $f$ are colored by $c$.			
			Czap, Jendrol', Kardo\v{s} and Sot\'{a}k~\cite{CzaJenKarSot12} showed that every $2$-edge connected plane graph
			admits a facial parity edge coloring with at most $20$ colors. We improve this bound to $\total$ colors.}
			
\bigskip
{\noindent\small \textbf{Keywords:} Facial parity edge coloring, planar graph, odd subgraph.}
  
\section{Introduction}

A \textit{facial parity edge coloring} or an \textit{FPE-coloring} of a $2$-edge connected plane graph is an edge coloring where 
no two consecutive edges of a facial walk of any face receive the same color. Additionally, for every face
$f$ and every color $c$ either no edge or an odd number of edges incident to $f$ are colored by $c$.
The minimum number of colors such that an FPE-coloring of a graph $G$ exists is called the \textit{facial parity chromatic index}, 
$\chi_{fp}(G)$. 

In~\cite{CzaJenKar11}, Czap, Jendrol', and Kardo\v{s} defined the FPE-coloring of plane graphs and proved that $92$ colors suffice
to color any $2$-edge connected plane graph. Recently, Czap et al.~\cite{CzaJenKarSot12} improved the upper bound.
\begin{theorem}[Czap, Jendrol', Kardo\v{s}, and Sot\'{a}k]
	\label{thm:old}
	Let $G$ be a $2$-edge connected plane graph. Then,
	$$
		\chi_{fp}(G) \le 20\,.
	$$	
\end{theorem}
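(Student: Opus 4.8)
The plan is to recast the two defining conditions of an FPE-coloring as conditions on the dual multigraph $G^{*}$ and then build the coloring one ``parity class'' at a time. Because $G$ is $2$-edge connected it is bridgeless, so every edge borders two \emph{distinct} faces and $G^{*}$ has no loops; this is exactly what makes ``the number of edges of a given colour incident to a face $f$'' a well-defined count with no pathological double-counting. Under duality, a colour $c$ is incident to a face $f$ an odd number of times precisely when the dual vertex $f$ has odd degree in the colour-$c$ subgraph of $G^{*}$. Hence the parity requirement says exactly that each colour class is a \emph{facially odd} subgraph (it meets every face it touches an odd number of times), while the requirement that consecutive edges of a facial walk differ becomes the requirement that edges consecutive in the planar rotation around a vertex of $G^{*}$ receive distinct colours.

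First I would establish a structural decomposition: a partition of $E(G)$ into four facially odd subgraphs $H_1,\dots,H_4$, each meeting every face an odd number of times or not at all. I would obtain this by a parity/cycle-space argument over $\mathrm{GF}(2)$, in the spirit of the odd-subgraph decompositions of Pyber and M\'atrai, adapted to the loopless planar dual. If one now colours all of $H_i$ with a single colour $c_i$, the parity condition is immediately satisfied on every face; and since the four classes are assigned disjoint palettes, the only possible violations of the ``consecutive edges differ'' condition are between two edges of the \emph{same} class $H_i$ that are adjacent along a common facial walk.

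The heart of the argument is a recolouring lemma applied to each class in isolation. I would equip each $H_i$ with a palette of five colours and recolour its edges so that, globally, (i) any two $H_i$-edges consecutive on a facial walk differ, and (ii) around every face $f$ each palette colour is used an odd number of times. Around a fixed $f$ the $H_i$-edges appear interspersed with edges of the other classes, so the properness constraint only links genuinely consecutive $H_i$-edges; these form disjoint paths (or, when the whole boundary of $f$ lies in $H_i$, the full facial cycle), which are easy to colour properly, and the surplus colours supply the slack needed to fix the parity of each colour on each face without breaking properness. Running this for all four classes in disjoint palettes yields a valid FPE-coloring with $4\times 5 = 20$ colours.

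The main obstacle is precisely this recolouring lemma, because it is a \emph{global} constraint rather than a face-by-face one: each $H_i$-edge lies on two faces, so its single colour must simultaneously correct the parity of that colour on both of them, and these demands propagate along the subgraph. The tight spots are short odd faces---triangles and pentagons---where a naive two-colour alternation would leave one colour appearing an even number of times; repairing such a parity while keeping the colouring facially proper is exactly what forces the fifth colour in each palette. I would therefore spend most of the effort proving that five colours per class always provide enough freedom, handling the small faces and the two-faces-per-edge coupling by a careful parity-counting (discharging-style) argument.
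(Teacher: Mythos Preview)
Your two-phase plan reverses the order used in the argument for the $20$-colour bound, and this reversal breaks down at the very first step. You propose to partition $E(G)$ into four facially odd subgraphs, i.e., to find an odd edge colouring of the loopless dual $G^{*}$ with four colours. But four colours do not suffice for loopless multigraphs in general, even planar ones. Take $G=K_{2,3}$: its three faces are $4$-cycles and its dual $G^{*}$ is the triangle with every edge doubled. In that doubled triangle every vertex has degree~$4$, and one checks directly that the only nonempty odd subgraphs are single edges, so $\chi_o'(G^{*})=6$. Hence no partition of $E(K_{2,3})$ into four (or even five) facially odd classes exists, and your Phase~1 already fails on this tiny $2$-edge connected plane graph. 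The Pyber--M\'atrai results you invoke are for \emph{simple} graphs; the dual of a $2$-edge connected plane graph is merely loopless.

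The actual route to $20$ runs the two phases in the opposite order. One first finds a \emph{facially-odd} edge colouring of $G$ with five colours: a colouring that is facially proper and, for every pair of adjacent faces $f,f'$ and every colour $c$, uses $c$ on an odd number (or none) of the edges common to $f$ and $f'$. The payoff of this intermediate condition is that each monochromatic dual subgraph $G_c^{*}$ then has an \emph{odd} number of parallel edges between any two adjacent vertices, and precisely that hypothesis is what guarantees $\chi_o'(G_c^{*})\le 4$ for multigraphs. Refining each of the five colours into four odd classes gives $5\times 4=20$. In your scheme the analogous refinement (your Phase~2 ``recolouring lemma'') would have to manufacture facial properness and per-face parity simultaneously on each $H_i$; you rightly flag this as the main obstacle but offer no argument for it, and in any case the failure of Phase~1 makes the question moot.
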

They also showed that if $G$ is $3$-edge connected, $12$ colors suffice, and in case when $G$ is a $4$-edge connected plane graph,
it has an FPE-coloring with at most $9$ colors.
In addition, Czap~\cite{Cza12} showed that facial parity chromatic index of outerplanar graphs is at most $15$
and presented the example, two pentagons with one vertex identified, which requires 10 colors for an FPE-coloring (see Fig.~\ref{fig:minimal}).
\begin{figure}[ht]
	$$
		\includegraphics{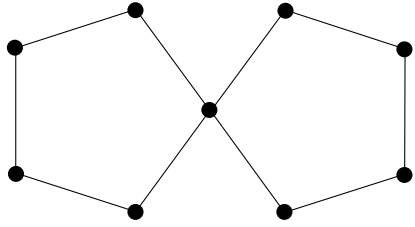}
	$$
	\caption{The graph that needs at least $10$ colors for an FPE-coloring.}
	\label{fig:minimal}
\end{figure}
In this paper we improve the upper bound for facial parity chromatic index of any $2$-edge connected plane graph.
\begin{theorem}
	\label{thm:main}
	Let $G$ be a $2$-edge connected plane graph. Then,
		$$
			\chi_{fp}(G) \le \total\,.
	$$
\end{theorem}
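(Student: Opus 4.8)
The plan is to separate the two requirements of an FPE-colouring — properness along facial walks and the odd-parity condition — settle each with a bounded budget of colours, and combine the budgets multiplicatively rather than additively. Properness will be handled with $4$ colours via the Four Colour Theorem; the point that makes the bound $\total$ rather than the $20$ of Theorem~\ref{thm:old} is precisely the use of the Four rather than the Five Colour Theorem here. Concretely, I would first pass to the medial graph $M(G)$: its vertices are the edges of $G$, and two of them are adjacent exactly when the corresponding edges of $G$ are consecutive on the boundary walk of some face. Since $G$ is $2$-edge connected it has no bridges and minimum degree at least $2$, so $M(G)$ is a loopless $4$-regular plane graph and hence, by the Four Colour Theorem, has a proper vertex $4$-colouring. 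This yields a partition $E(G)=P_1\cup P_2\cup P_3\cup P_4$ in which no two edges of the same $P_i$ are consecutive on any facial walk. Two observations make this the right starting point: such a partition already respects the properness clause, and, decisively, any later subdivision of the classes $P_i$ into finer colour classes still respects it, because two edges that are consecutive on a facial walk and lie in different $P_i$'s receive different colours whatever we do inside the classes.

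It then remains to repair parity inside each $P_i$. Regard $P_i$ as a loopless subgraph of the dual $G^{\ast}$ — an edge of $G$ joins the two faces it separates, and these are distinct since $G$ has no bridge. For a colour class $Q\subseteq P_i$, the condition that every face sees an odd number, or none, of its edges is exactly the condition that $Q$ be an \emph{odd subgraph} of $G^{\ast}$: every vertex met by $Q$ has odd degree in $Q$, while a face entirely missed by $Q$ is simply an isolated vertex. Hence it suffices to show that the edge set of each $P_i$ decomposes into at most four odd subgraphs of $G^{\ast}$; assigning the resulting $\le 4\cdot 4=\total$ classes distinct colours then produces a colouring satisfying both conditions of an FPE-colouring. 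A statement of this kind is exactly what Pyber's covering theorem and the more recent work on decompositions into odd subgraphs provide, and here there is the extra feature that at each vertex of $G^{\ast}$ the edges of $P_i$ are pairwise non-consecutive in the rotation, which should make a direct argument for the constant $4$ tractable.

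The main obstacle is this last step: proving that four odd subgraphs always suffice for a class $P_i$, and that four is genuinely the right value — odd cycles already force three, and near-regular configurations are where the count is tight — will require a careful analysis, most naturally organised by the set of even-degree vertices of $P_i$ in $G^{\ast}$, for instance by first extracting a spanning odd subgraph and then colouring the low-degree remainder with the few colours that are left. As a fallback, should the constant $4$ prove stubborn, one can avoid the medial step for well-connected graphs and instead reduce a general $2$-edge connected plane graph to the $3$-edge connected case by splitting along $2$-edge cuts (equivalently, by suppressing digons of $G^{\ast}$), invoke the known $12$-colour bound there, and spend at most $4$ further colours to make the colourings of the $3$-edge connected pieces agree on the cut edges along the laminar family of these cuts; this route too lands on $\total$.
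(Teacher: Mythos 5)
Your overall architecture --- a facially proper $4$-colouring followed by a decomposition of each colour class, viewed in the dual, into at most $4$ odd subgraphs, giving $4\cdot 4=\total$ --- is exactly the architecture of the paper's proof, and your translation of the parity condition into ``each colour class is an odd subgraph of $G^*$'' is correct. The gap is in the step you yourself flag as the main obstacle, and it is not a technicality: each class $P_i$ sits inside the dual $G^*$ as a loopless \emph{multigraph} (two faces can share many edges of the same colour), and Pyber's theorem that $\chi_o'\le 4$ holds only for simple graphs. For multigraphs the bound fails --- the paper exhibits a loopless multigraph with odd chromatic index $6$ (Figure~\ref{fig:oddmult}) --- so ``four odd subgraphs always suffice for a class $P_i$'' is false without further hypotheses, and your only defence (that the edges of $P_i$ at a face are pairwise non-consecutive in the rotation) is neither developed nor obviously sufficient: non-consecutiveness does not prevent an even number of parallel edges between two dual vertices, which is exactly what creates the bad examples.

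The key extra idea the paper uses, and which your proposal is missing, is to strengthen phase one: instead of an arbitrary proper colouring of the medial graph, one proves (Lemma~\ref{lem:facodd}, by induction on the number of edges with several case distinctions) that $4$ colours suffice for a \emph{quasi-facially-odd} colouring, i.e.\ one in which every pair of adjacent faces shares an odd number (or none) of edges of each colour, with a controlled exception for $C_5$-blocks. This guarantees that each dual multigraph $G_c^*$ has odd multiplicity between any two adjacent vertices (or a $2$-bridge in the exceptional case), and for such multigraphs $\chi_o'\le 4$ does hold (Proposition~\ref{prop:odd2} and Lemma~\ref{prop:odd}). So the improvement from $20$ to $\total$ is not ``Four Colour Theorem instead of Five'' applied to mere properness; it is the stronger statement that the facially-odd colouring itself can be achieved with $4$ colours. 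Your fallback via the $3$-edge connected case is also not a proof as stated: recolouring cut edges to reconcile the pieces changes the parity counts on the faces those edges bound, and no mechanism is given for repairing this along the laminar family of $2$-cuts.
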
 

\section{Proof of Theorem~\ref{thm:main}}

Before we prove Theorem~\ref{thm:main}, we present some definitions and the notation. 
A \textit{dual} $G^* = (V^*,E^*,F^*)$ of a plane graph $G = (V,E,F)$ 
is the graph with $V^* = V$ and two vertices $f$, $f'$ of $G^*$ are connected by an edge $e^*$ if the faces $f$ and $f'$ are incident
to an edge $e$ in $G$. We say that the edges $e^*$ and $e$ are \textit{associated}. An $H$-block of a graph $G$ is a block in $G$ isomorphic
to the graph $H$. Let $V$ be some subset of vertices of a graph $G$. As usual, $G[V]$ is a subgraph of $G$ induced by the vertices of $V$.

The \textit{facial walk} of a face $f$ is the shortest
closed walk containing all the edges from the boundary of $f$. Two edges are \textit{face-adjacent} if they are consecutive on some
facial walk. An edge coloring is \textit{facially proper} if the face-adjacent edges receive distinct colors. It is easy to see that
a facially proper edge coloring is equivalent to a vertex coloring of the medial graph of $G$, which is planar and therefore $4$-colorable
by the Four Color Theorem. 
We say that an edge coloring of $G$ is \textit{facially-odd} if it is facially proper and for any pair of adjacent faces $f$ and $f'$
holds that the number of common edges colored by the same color $c$ is odd or zero, for every color $c$.

\paragraph{}
In~\cite{CzaJenKarSot12}, the authors proved that for any connected plane graph there is a facially-odd edge coloring with
at most $5$ colors. The bound is best possible if the graph contains blocks isomorphic to $C_5$, a cycle on $5$ vertices. 
We say that an edge coloring is \textit{quasi-facially-odd} if it is facially-odd on all the edges, except for the edges
of $C_5$-blocks. The edges of every $C_5$-block are colored by $4$ colors such that the color that appears twice is assigned to 
the edges that are not face-adjacent.
\begin{lemma}
	\label{lem:facodd}	
	Every connected plane graph admits a quasi-facially-odd edge coloring with at most $4$ colors.
\end{lemma}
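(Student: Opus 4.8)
\medskip
\noindent\textit{Proof plan.}
The plan is to decompose $G$ along its block--cut tree, colour each block separately (using the prescribed four--colouring on $C_5$--blocks), and then glue the block colourings together. Root the block--cut tree and process the blocks outward from the root. For each block $B$ I would produce an edge colouring that is facially--odd with respect to the plane graph $B$ alone --- or, when $B\cong C_5$, the four--colouring whose repeated colour lies on two non--face--adjacent edges --- and, just before attaching $B$ to the already coloured part, compose it with a permutation of the four colours. Two observations make the gluing work. First, at a cut vertex $v$ the edge sets of the various blocks through $v$ occupy pairwise disjoint arcs of the rotation at $v$, so when $B$ is attached only the at most two edges of $B$ at the ends of its arc at $v$ can be face--adjacent to an already coloured edge; since any permutation of the four colours preserves the property arranged inside $B$ and there are $4!$ permutations against at most two forbidden colour equalities, a suitable permutation always exists. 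Second, when $G$ has more than one block it is not a cycle, hence the common boundary of two distinct faces contains no cycle (such a cycle would be a whole component) and is therefore a union of paths; and no such path crosses a cut vertex, because at a cut vertex two edges from different blocks bound at most one common face. Thus the common boundary of any two faces of $G$ lies inside a single block, and the parity requirement for $G$ follows from the parity requirements inside the blocks. So it suffices to colour the blocks.

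For the small blocks this is easy. A $K_2$--block (a bridge) is incident to a single face, carries no parity constraint, and gets colour $1$. A $C_5$--block $e_1e_2e_3e_4e_5$ receives the colours $1,2,1,3,4$: this is facially proper, uses four colours, and its repeated colour $1$ sits on the non--face--adjacent edges $e_1,e_3$, which is exactly the quasi rule. A cycle block $C_n$ with $n\ne 5$ admits a proper edge colouring with at most four colours all of whose colour classes are odd: for $n$ even, alternate two colours and, when $n\equiv 0\pmod 4$, recolour two consecutive edges with two further colours to correct the parities; for $n$ odd with $n\ge 7$, a proper $3$--colouring with all classes odd exists, by a short case check. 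Since a cycle block has exactly two faces sharing all of its edges, ``facially--odd inside the block'' is precisely the all--odd--classes condition just arranged.

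It remains to colour a block $B$ that is $2$--connected and not a cycle. Here I would appeal to Czap, Jendrol', Kardo\v{s} and Sot\'{a}k: inspecting their proof that every connected plane graph has a facially--odd $5$--colouring, the fifth colour is introduced only in order to deal with $C_5$--blocks, so when $B$ is $2$--connected and not a cycle their construction already yields a facially--odd $4$--colouring of $B$, which is what we need. (A self--contained alternative would be to suppress the degree--$2$ vertices of $B$, obtaining a $2$--connected plane multigraph of minimum degree at least $3$, to $4$--colour its medial graph by the Four Colour Theorem, and then to re--expand every suppressed path by a proper colouring with all colour classes odd whose two end colours avoid the at most two forbidden colours at its ends --- routine for paths.)

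The step I expect to be the real obstacle is exactly the last one: controlling the parity for pairs of faces that share two or more edges which need not be consecutive on a facial walk --- equivalently, parallel classes in the dual, or small edge cuts in $B$. A colouring that is merely facially proper, such as one read off from the Four Colour Theorem via the medial graph, need not colour such a configuration with odd classes, so the $2$--connected case genuinely needs the finer structural bookkeeping of the cited paper at precisely these places; by comparison the bridges, the cycles, and the gluing at cut vertices are routine.
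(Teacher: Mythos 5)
Your reduction to blocks is sound and essentially parallels the paper's first step (the paper shows a minimal counterexample is $2$-connected by splitting at a cut vertex and permuting the four colors to fix the at most two face-adjacency constraints across the cut; your rooted block--cut-tree gluing is the same idea, and your observation that two distinct faces of $G$ share edges in at most one block is correct). The bridge, cycle and $C_5$ cases are also fine. The problem is that everything of substance in the lemma lives in the one case you do not prove: a $2$-connected plane block that is not a cycle. For that case you write that you would ``appeal to Czap, Jendrol', Kardo\v{s} and Sot\'{a}k,'' asserting that in their facially-odd $5$-coloring the fifth color is needed only for $C_5$-blocks. That assertion is exactly the content of the present lemma (the paper even states the $4$-color facially-odd result for $2$-connected graphs other than $C_5$ as a \emph{corollary} of this lemma, not as something available from the earlier paper); all that is actually quotable from the literature is that $5$ colors suffice and that $C_5$ shows $5$ is sometimes necessary, which does not imply that $4$ suffice in the absence of $C_5$-blocks. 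Your fallback sketch (suppress degree-$2$ vertices, $4$-color the medial graph, re-expand) fails for the reason you yourself identify: a facially proper coloring gives no parity control over a pair of faces sharing several edges, i.e.\ over $2$-edge-cuts and parallel classes in the dual. So the proposal, by its own admission, stops exactly where the proof has to begin.

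For comparison, the paper closes this gap with a self-contained minimal-counterexample argument on the $2$-connected graph: pick two faces $f,f'$ with $k\ge 2$ common edges $e_1,\dots,e_k$. If two consecutive common edges $e_i,e_{i+1}$ form a nontrivial edge-cut, split $G$ into the two contractions $H_1=G/D_2$ and $H_2=G/D_1$, color them by minimality, and permute one coloring so that $e_i,e_{i+1}$ agree in both (with a small separate construction when one side is a $C_5$). Otherwise $e_1,\dots,e_k$ form a path (or $G$ is a cycle), and one contracts some of these edges, colors the smaller graph by minimality, and extends by explicit patterns for $k=2,3,4,5$ and $k\ge 6$ that keep every color class odd or empty on $e_1,\dots,e_k$ while respecting face-adjacency (e.g.\ for $k\ge 6$, reuse the color of $e_1$ on $e_3,e_5$ and the color of $e_6$ on $e_2,e_4$). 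Some argument of this kind --- inductively repairing the parity along each maximal run of common edges of a face pair --- is what your proof still needs.
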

\begin{proof}	
	Suppose, for a contradiction, that $G$ is a minimal counterexample to the lemma, i.e., a connected plane graph with the minimum 
	number of edges that does not admit a quasi-facially-odd edge coloring with at most $4$ colors. 
	
	First, we show that $G$ is 
	$2$-connected. Suppose, to the contrary, that $v$ is a cut-vertex in $G$. Let $C_1$ be one of the components of $G - v$ and $C_2$
	the subgraph comprised of the remaining components.
	By the minimality of $G$, there exist quasi-facially-odd edge colorings $\varphi_1$, $\varphi_2$ with at most $4$ colors of the 
	graphs $C_1' = G[V(C_1) \cup \set{v}]$ and $C_2' = G[V(C_2) \cup \set{v}]$. Let $(e_1,e_1')$ and $(e_2,e_2')$ be the two pairs of face-adjacent edges in $G$, 
	where $e_1, e_2 \in C_1'$ and $e_1',e_2' \in C_2'$, see Figure~\ref{fig:bags}. 
	\begin{figure}[ht]
		$$
			\includegraphics{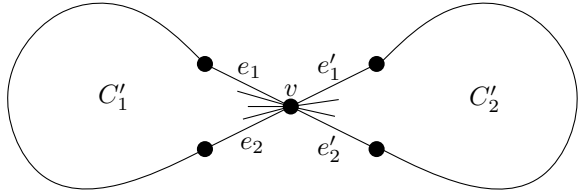}
		$$
		\caption{The face-adjacent edges of $C_1'$ and $C_2'$.}
		\label{fig:bags}
	\end{figure}
	Notice that the edge $e_1$ may be equal to $e_2$, and $e_1'$ may be equal to $e_2'$. 
	By permuting the colors of $\varphi_2$ such that $\varphi_1(e_1) \neq \varphi_2(e_1')$ and $\varphi_1(e_2) \neq \varphi_2(e_2')$
	we obtain a quasi-facially-odd edge coloring of $G$. Obviously, such a permutation exists. Thus, $G$ is $2$-connected. In particular, $G$
	does not contain a $C_5$-block, unless $G$ is isomorphic to $C_5$.
		
	If any pair of faces in $G$ has at most one edge in common, every facially proper edge coloring is also quasi-facially-odd. Thus, we may 
	assume that there is at least one pair of faces in $G$ that has at least two edges in common. Let $f$ and $f'$ be two faces with $k$ 
	common edges, $e_1,e_2,\dots,e_k$, $k \ge 2$, appearing in the given order on the facial walk of $f$. 
	First, we consider the case when two consecutive edges, $e_i$ and $e_{i+1}$, form a nontrivial 
	edge-cut in $G$, i.e., each of the two components, $D_1$ and $D_2$, of $G - \set{e_i, e_{i+1}}$ contains at least two vertices. 
	Let $H_1 = G / D_2$, i.e., $H$ is constructed from $G$ by contracting $D_2$ into a vertex, and similarly let $H_2 = G / D_1$. 
	
	In order to obtain a quasi-facially-odd edge coloring of $G$, we consider two cases. Suppose first that one of the subgraphs $H_1$ and $H_2$, 
	say $H_1$, is isomorphic to $C_5$ (see Figure~\ref{fig:special}). Observe that in this case $H_2$ is not isomorphic to $C_5$, by the choice of
	$e_i$ and $e_{i+1}$.
	\begin{figure}[ht]
		$$
			\includegraphics{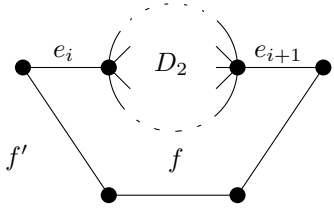}
		$$
		\caption{An example of the graph $G$ where $H_1$ is a $5$-cycle.}
		\label{fig:special}
	\end{figure}	
	By the minimality of $G$, there exists a quasi-facially-odd edge coloring $\rho$ of $H_2 / e_{i+1}$. We show that $\rho$ can
	be extended to $G$. Let $e_i$, $c_1$, $c_2$, $c_3$, and $e_{i+1}$ be the consecutive edges of $H_1$. We assign the color $\rho(e_i)$
	to $e_{i+1}$ and $c_3$, and color $c_2$ and $c_4$ with the two of the three remaining colors. Hence, extending $\rho$ to $G$.
	
	Suppose now that neither $H_1$ nor $H_2$ is isomorphic to $C_5$. By the minimality of $G$, there are quasi-facially-odd edge colorings 
	$\rho_1$ and $\rho_2$ with at most $4$ colors of $H_1$ and $H_2$, respectively. By permuting the colors of the edges of $H_2$ such that
	$\rho_1(e_i) = \rho_2(e_i)$ and $\rho_1(e_{i+1}) = \rho_2(e_{i+1})$, we obtain a quasi-facially-odd edge coloring of $G$. 
	Such a permutation always exists, since the edges $e_i$ and $e_{i+1}$ receive distinct colors in both graphs, $H_1$ and $H_2$, due to the face-adjacency.
	
	Hence, we may assume that no two consecutive common edges of $f$ and $f'$ form a nontrivial edge-cut. That means that the edges $e_1,e_2,\dots,e_k$ form a path
	or possibly a cycle if $f$ and $f'$ are the only faces in $G$, since $G$ is $2$-connected. We consider the subcases regarding the size of $k$.	
	\begin{itemize}
		\item{} $k = 2$.\quad By the minimality, there is a quasi-facially-odd edge coloring of $G / e_2$ with $4$ colors. Notice that
				at most three colors are forbidden for the only noncolored edge $e_2$, the colors of the face-adjacent edges. Hence we can color it.
		\item{} $k = 3$.\quad By the minimality, there is a quasi-facially-odd edge coloring of $G / e_2$ with $4$ colors. Only the colors
				of $e_1$ and $e_3$ are forbidden for $e_2$. Hence, we color $e_2$ with one of the two remaining colors.
		\item{} $k = 4$.\quad Similarly as in the previous case, we color $G / \set{e_2,e_3}$ with $4$ colors. The colors
				of $e_1$ and $e_4$ are forbidden for $e_2$ and $e_3$. We color them with the two remaining colors.
		\item{} $k = 5$.\quad Since $G$ is $2$-connected, the edges $e_1,e_2,\dots,e_5$ do not form a $C_5$-block in $G$, unless $G = C_5$.
				In case when $G = C_5$, we color the edge $e_1$ and $e_3$ by color $1$ and the remaining three edges by the remaining three colors.
				Otherwise, we color $G / \set{e_2,\dots,e_5}$ with $4$ colors. Then, we use the color of $e_1$ to color $e_3$ and $e_5$, 
				since $e_5$ is not face-adjacent with $e_1$, and two of the three remaining colors to color $e_2$ and $e_4$. 				
		\item{} $k \ge 6$.\quad If $G$ is not a $9$-cycle, we color $G / \set{e_2,\dots,e_5}$ with $4$ colors. Then, we use
				the color of $e_1$ to color $e_3$ and $e_5$ and the color of $e_6$ to color $e_2$ and $e_4$. Otherwise, $G = C_9$ and
				we color the nine edges by the colors $1$, $2$, and $3$ alternatingly.
	\end{itemize}
	It is easy to see that every color appears odd times or does not appear at all on the edges $e_1,e_2,\dots, e_k$. Hence, $G$ admits
	a quasi-facially-odd edge coloring with at most $4$ colors, a contradiction.
\end{proof} 
As a corollary of Lemma~\ref{lem:facodd}, we infer that if the plane graph is $2$-connected and not isomorphic to $C_5$, 
it admits a facially-odd edge coloring with at most $4$ colors.

\paragraph{}
An \textit{odd graph} is a graph where all the vertices have odd or zero degree. An \textit{odd edge coloring} of a graph $G$ 
is a coloring (not necessarily proper) of the edges of $G$ such that every color class
induces an odd subgraph. The minimum number of colors for which an odd edge coloring of $G$ exists is called 
the \textit{odd chromatic index} of $G$, $\chi_o'(G)$. Obviously, every odd graph admits an odd edge coloring by one color.
In 1991, Pyber~\cite{Pyb91} proved the following theorem.
\begin{theorem}[Pyber]
	\label{thm:odd}
	Let $G$ be a simple graph. Then,
	$$
		\chi_o'(G) \le 4\,.
	$$
\end{theorem}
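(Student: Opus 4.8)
The plan is to isolate a single clean lemma about trees, prove it by induction, and then bootstrap to arbitrary graphs. First I would observe that it suffices to treat connected graphs: the color classes of an odd edge coloring can be merged freely across distinct components, since ``having odd or zero degree'' is a per‑component condition. So from now on $G$ is connected.

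The key lemma I would establish is: the edges of any tree split into two odd forests. I would prove this by induction on the number of edges, handling single edges and stars directly (a star $K_{1,k}$ uses one color if $k$ is odd and a $(k-1,1)$ split otherwise), and for the remaining cases examining a longest path $p_0 p_1 \cdots p_\ell$. Then $p_0$ is a leaf and, by maximality, every neighbour of $p_1$ other than $p_2$ is a leaf. If $p_1$ has at least two leaf neighbours, I delete two of them, apply induction to the smaller tree, and reinsert both deleted edges into whichever of the two color classes currently has odd — hence nonzero — degree at $p_1$; at least one does, since the two color‑degrees at $p_1$ sum to a positive number and each is $0$ or odd, so they cannot both vanish. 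If instead $p_1$ has exactly one leaf neighbour, then $\deg(p_1)=2$ and $p_1$ lies on a pendant path; I delete the leaf $p_0$, apply induction, and color $p_0p_1$ with the color not appearing on the other edge at $p_1$. In both cases the inserted edges give their leaf endpoints degree $1$, so every class stays odd. An immediate corollary is that every forest — and hence every subforest obtained by deleting edges — splits into two odd forests.

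Finally I would bootstrap to an arbitrary connected $G$, the target being to write $E(G)$ as an edge‑disjoint union of a forest $F$ and two odd subgraphs $H_1,H_2$; then the forest corollary colors $F$ with colors $1,2$ and $H_1,H_2$ take colors $3,4$. To produce such a decomposition I would take $F$ to be a spanning forest that is a $T$‑join for the set $W$ of odd‑degree vertices of $G$ (i.e.\ a vertex has odd degree in $F$ exactly when it lies in $W$): such a join exists because $W$ meets each component in an even number of vertices, and by repeatedly deleting cycles from it we may assume $F$ is a forest. Then $G-E(F)$ has all degrees even, i.e.\ is a disjoint union of Eulerian graphs. An even graph on an even number of vertices is itself $2$‑odd‑colorable (peel off its essentially unique odd spanning subgraph, whose complement then also has all odd degrees), so the only remaining trouble is the Eulerian components on an odd number of vertices; for each such component I would ``borrow'' a pendant $F$‑edge joining it to the rest of $G$ (one always exists, since after contracting the Eulerian components the remaining edges are exactly the $F$‑edges and the contraction is connected) and show that an Eulerian graph together with one pendant edge is again $2$‑odd‑colorable — the pendant edge absorbs the parity defect. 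Reusing colors $3,4$ across the various Eulerian pieces and colors $1,2$ on what remains of $F$ then yields a $4$‑odd‑coloring. I expect the genuine obstacle to be exactly this last step: once the convenient tree structure is gone, one must keep all vertex degree‑parities consistent simultaneously — in particular handling the odd‑order Eulerian pieces and checking that borrowing pendant edges and recycling colors $3,4$ never creates a positive even degree somewhere else. The tree lemma is the easy part; the bookkeeping in the bootstrap is where the real work lies.
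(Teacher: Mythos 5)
First, a point of orientation: the paper offers no proof of Theorem~\ref{thm:odd} --- it is quoted from Pyber --- so the only internal comparison available is with Lemma~\ref{lem:stars} and Lemma~\ref{prop:odd}, which contain the machinery you are rebuilding. Your tree lemma is correct (it is exactly Lemma~\ref{lem:stars}, which the paper proves by a greedy propagation from a root rather than by your longest-path induction), and the reduction to connected graphs is fine. But your bootstrap takes the join in the wrong direction, and that is where the argument genuinely breaks. The paper's own use of this machinery (in the proof of Lemma~\ref{prop:odd}) chooses $F$ inside a spanning tree so that its odd-degree vertices are exactly the \emph{even}-degree vertices of the graph; then $G-F$ is already an odd graph (one color) and $F$ is a forest (two colors), for a total of three when the number of even-degree vertices is even, with the fourth color reserved for the single leftover even-degree vertex when $n$ is odd.

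By instead making $F$ a join for the odd-degree vertices, you force yourself to decompose even graphs into odd subgraphs, and that is exactly where the difficulty concentrates. Three concrete failures. (i) The claim that an Eulerian graph plus one pendant edge is $2$-odd-colorable is false: let $H$ be two $4$-cycles sharing a vertex $u$ and attach the pendant edge at $u$. Every degree-$2$ vertex must meet each class exactly once (the split $(0,2)$ is forbidden, since $2$ is positive and even), so the two classes alternate around each $4$-cycle and $u$ receives exactly two edges of each class from the cycles; whichever class absorbs the pendant edge, the other has degree $2$ at $u$. Your ``peel off an odd spanning subgraph'' argument silently needs all ambient degrees to be even, which fails at the attachment vertex. (ii) If an entire component of $G$ is Eulerian of odd order ($C_5$, or the two glued $4$-cycles above), then $F$ is empty there and there is no pendant edge to borrow; worse, such a component admits \emph{no} decomposition into two odd subgraphs: both classes would have to be odd at every vertex, so the degree sum of one class would be a sum of an odd number of odd numbers, hence odd, contradicting the handshake lemma. (iii) Even when a pendant edge exists, its far endpoint may already carry odd degree in colors $3$ and $4$ from its own Eulerian piece, and the borrowed edge would turn one of those degrees positive even; you never control this interaction, nor the case of several borrowed edges sharing a far endpoint. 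The repair is to reverse the join as in Lemma~\ref{prop:odd}, so that all the parity trouble is concentrated in a single vertex rather than spread over the Eulerian components.
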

The tight bound is realized by $W_4$, the wheel of $4$ spokes (the left graph in Figure~\ref{fig:oddmult}). For an odd edge coloring
of a multigraph we may need more than $4$ colors, e.g., the right graph in Figure~\ref{fig:oddmult} needs $6$ colors.
\begin{figure}[ht]
	$$
		\includegraphics{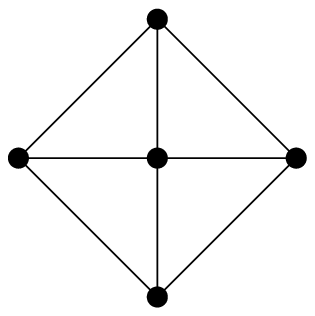} \quad\quad\quad\quad\quad \includegraphics{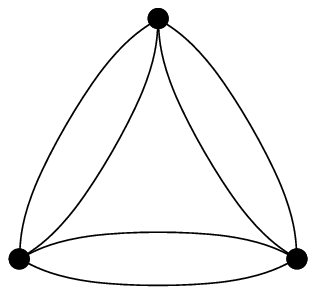}
	$$
	\caption{A simple graph with the odd chromatic index equal to $4$, and a multigraph with the odd chromatic index equal to $6$.}
	\label{fig:oddmult}
\end{figure}
However, if the given multigraph has some special properties, $4$ subgraphs suffice. The following simple result is mentioned in~\cite{Pyb91}
but we prove it here for the sake of completeness.
\begin{lemma}
	\label{lem:stars}
	Let $F$ be a forest. Then,
	$$
		\chi_o'(F) \le 2\,.
	$$
\end{lemma}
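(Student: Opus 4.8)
The plan is to decompose an arbitrary forest $F$ into two subgraphs each of which is a disjoint union of stars (a "star forest"), and then to observe that a single star is an odd graph. Indeed, a star $K_{1,m}$ with $m\ge 1$ has one vertex of degree $m$ and $m$ leaves of degree $1$; if $m$ is odd this is already an odd graph, and if $m$ is even we can peel off one leaf edge... but that is the wrong move — instead the cleaner statement is that \emph{every} disjoint union of stars is an odd graph exactly when every star in it has odd size, so the decomposition must be done carefully. The robust way to phrase it: a disjoint union of edges (a matching) is an odd graph, and more generally I claim any star forest can be $1$-colored or handled so that each color class is odd. So the real task is to $2$-color the edges of $F$ so that in each color class every component is a star of odd size, or more simply, so that each color class is a forest in which every vertex has degree $1$ — i.e. a matching. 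That reduces to: the edge set of any forest is the union of two matchings? That is false (a path on $4$ edges needs more), so the matching reduction is too strong.

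The correct reduction is to star forests. First I would root each tree of $F$ at an arbitrary vertex, giving every non-root vertex a unique parent. Color an edge $uv$, where $v$ is the parent of $u$, by the parity of the depth of $v$: edges whose lower endpoint has an even-depth parent get color $1$, the rest get color $2$. Then within color $i$, every edge joins a vertex of depth $d$ to a vertex of depth $d+1$ where the endpoint at the smaller depth is shared among several such edges, so each color class is a disjoint union of stars, each star centered at the shared parent. This shows $F$ decomposes into two star forests. The remaining point is that a disjoint union of stars is an odd graph: a star $K_{1,m}$ has degrees $m, 1, 1, \ldots, 1$, which is odd precisely when $m$ is odd; when $m$ is even we must fix it. So after the above bipartition I would, for each even-order star $K_{1,m}$ in color class $1$, recolor exactly one of its edges with color $2$; this changes the center's degree in class $1$ from $m$ (even) to $m-1$ (odd), turns the moved edge into an isolated edge in class $2$ (degree $1$, fine), and does not disturb any other center — because each leaf belongs to only one star in each class. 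Symmetrically fix the even stars in class $2$; since fixing a star in one class only ever \emph{adds isolated edges} to the other class, the two passes don't interfere.

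After these repairs, every component of every color class is a star of odd order (or an isolated edge, which is $K_{1,1}$), hence every vertex has degree $1$ or an odd number or $0$, so each class induces an odd subgraph; thus $\chi_o'(F)\le 2$. The main obstacle, such as it is, is getting the parity repair to be consistent — one has to check that redistributing an edge out of an even star does not create a new even star elsewhere, which is exactly why moving an edge to a brand-new isolated position in the \emph{other} class (rather than attaching it to an existing star) is the right choice; isolated edges are odd and never merge with anything. One should also remark that the bound $2$ is tight, since a path on three edges is not an odd graph, so one color does not suffice in general.
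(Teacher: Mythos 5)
Your decomposition of $F$ into two star forests via the depth parity of the parent is correct, but the parity-repair step at the end has a genuine gap: the edge you move out of an even star is \emph{not} an isolated edge in the other color class. Concretely, suppose $v$ has even depth and the class-$1$ star centered at $v$ (its $m$ child edges) has $m$ even, and you recolor the child edge $vu$ with color $2$. In class $2$ this edge now meets (i) the edge from $v$ to its own parent, which already lies in class $2$ because that parent has odd depth, so $v$ acquires class-$2$ degree $2$; and (ii) all the child edges of $u$, which also lie in class $2$ because $u$ has odd depth, so $u$'s class-$2$ degree flips parity. A minimal counterexample is the path on five vertices rooted at its center $r$: class $1$ consists of the two edges at $r$ (an even star), class $2$ of the two pendant edges, and whichever edge of the center star you move to class $2$ becomes adjacent to a pendant edge, creating a vertex of class-$2$ degree $2$. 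So the claim that ``fixing a star in one class only ever adds isolated edges to the other class'' is false, and the two passes do interfere; the repair at a vertex must take into account the color already carried by the edge to its parent.

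That coordinated choice is exactly what the paper's proof does: it processes each tree top-down from an arbitrary root, and at each vertex $u$ it reads off the color already assigned to the edge joining $u$ to its parent (say color $1$) and then colors all remaining edges at $u$ with color $1$ if $d(u)$ is odd and with color $2$ if $d(u)$ is even; either way $u$ ends with odd or zero degree in each class, and no decision ever needs to be revisited. Your argument can be rescued by replacing the ``fixed initial coloring plus local repair'' scheme with such a single top-down pass (equivalently, by induction on the number of edges), but as written the repair step fails. Your closing tightness remark (a path on three edges is not an odd graph, so one color does not suffice) is fine.
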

\begin{proof}
	It suffices to show that every tree $T$ has the odd chromatic index at most $2$. We will construct an odd edge coloring of $T$ with
	at most two colors. Let $v$ be an arbitrary vertex of $T$ with degree $d \ge 1$ (otherwise $T$ has no edges and so $\chi_o'(T) = 0$). If $d$ is odd, we color 
	all the edges incident to $v$ by $1$, otherwise we color $d-1$ incident edges by $1$ and the remaining with $2$. 
	
	Now, we continue by coloring the edges incident to the neighbors of $v$. Each neighbor $u$ already has one incident edge colored by some color, say $1$,
	thus we color all the other incident edges of $u$ with $1$, unless $d(u)$ is even. In the latter case we color them by color $2$. We repeat the same
	procedure on the vertices that have only one incident edge colored an eventually we obtain an odd edge coloring of $T$ with at most two colors.	
\end{proof}

A \textit{$k$-bridge} in a graph $G$ is a collection 
of $k \ge 1$ edges between two vertices whose deletion results in an increased number of components in $G$.
\begin{lemma}
	\label{prop:odd}
	Let $G$ be a loopless multigraph with a $k$-bridge. Then, 
	$$
		\chi_o'(G) \le 4\,.
	$$
\end{lemma}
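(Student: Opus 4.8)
The plan is to reduce the problem to Pyber's theorem (Theorem~\ref{thm:odd}) on the two sides of the $k$-bridge and then stitch the colorings together, using Lemma~\ref{lem:stars} to handle the degenerate pieces cheaply. Write the $k$-bridge as a set of parallel edges $e_1,\dots,e_k$ joining vertices $u$ and $v$, and let $G_u$ and $G_v$ be the two components obtained after deleting the bridge, with $u\in G_u$ and $v\in G_v$. The idea is to color $G_u\cup\{e_1,\dots,e_k\}$ and $G_v$ separately, or more symmetrically to distribute the bridge edges so that parity works out at both $u$ and $v$ simultaneously.

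First I would dispose of the trivial sub-cases: if $G_u$ (or $G_v$) is a single vertex, then $G$ is just $G_v$ together with a $k$-bridge hanging off $v$, and one can first odd-edge-color $G_v$ with $4$ colors by Pyber, then append the $k$ bridge edges --- if $k$ is odd give them all a fresh color class by themselves (a single edge or an odd "star" $K_{1,k}$ is odd), and if $k$ is even, merge $k-1$ of them into a color class already used at $v$ in a way that keeps $v$'s degree in that class odd, placing the last bridge edge into another class, exactly as in the forest argument of Lemma~\ref{lem:stars}; since only the parity at $u$ and $v$ can be affected and $u$ has degree $k$ only in the bridge, a careful split of the $k$ edges between at most two of the four color classes fixes both parities. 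So assume both $G_u$ and $G_v$ have at least two vertices.

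Next, apply Theorem~\ref{thm:odd} to get odd edge colorings $\varphi_u$ of $G_u$ and $\varphi_v$ of $G_v$, each using the palette $\{1,2,3,4\}$; these are independent, so we may permute the palette of $\varphi_v$ freely. Now I must assign colors to $e_1,\dots,e_k$ so that, after adding them, every color class of the combined coloring is still an odd subgraph. Adding bridge edges can only change the degrees of $u$ and $v$, so I only need to control, for each color $c$, the parity of the number of bridge edges colored $c$ at $u$ and (the same number) at $v$: if $u$ has degree $d_u(c)$ in class $c$ under $\varphi_u$ and $v$ has $d_v(c)$ under $\varphi_v$, I need the number $m_c$ of bridge edges colored $c$ to satisfy $m_c\equiv d_u(c)\bmod 2$ or (if $d_u(c)=0$) $m_c$ is even, and simultaneously the analogous condition at $v$. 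The point is that $d_u(c)$ is odd or zero for each $c$, and likewise $d_v(c)$; so the "desired parity" of $m_c$ at $u$ is a well-defined bit (with $0$ being acceptable when the class is empty at $u$), and similarly at $v$. By permuting $\varphi_v$'s palette I can try to line these parity demands up; the residual freedom is that $\sum_c m_c=k$, so I need the sum of the chosen parities to have the right parity mod $2$, which matches $k\bmod 2$ automatically because each bridge edge contributes $1$ to exactly one $m_c$ --- the total count of bridge-edge endpoints at $u$ equals $k$, and the parity bookkeeping of the four classes is consistent.

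The main obstacle, and where the argument needs care, is precisely this simultaneous parity matching at both endpoints: it is not always possible to permute one palette to make $d_u(c)$ and $d_v(c)$ have compatible parities for every $c$ with only $k$ bridge edges to spend, because an empty class at $u$ but nonempty (odd) class at $v$ forces $m_c$ to be both even and odd. The fix is to observe we are allowed to re-use colors that are empty on one side: if class $c$ is empty in $G_u$, then giving some bridge edges color $c$ only constrains the parity at $v$, so such classes are "free" at $u$; and because $G_u$ is a genuine graph on $\ge 2$ vertices it has at most $4$ nonempty classes but we can always arrange, by choosing which classes are nonempty via the permutation, that the parity demands are satisfiable with $m_1+\cdots+m_4=k$ and each $m_c\ge 0$ --- in the worst case we split the $k$ bridge edges across two classes, one contributing the needed odd parity at $u$ (or at $v$) and the other absorbing the rest with even count on that side, mirroring the two-color trick of Lemma~\ref{lem:stars}. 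Once every $m_c$ is fixed consistently, color $m_c$ of the bridge edges with $c$ for each $c$; by construction every color class of the resulting coloring of $G$ induces an odd subgraph, so $\chi_o'(G)\le 4$.
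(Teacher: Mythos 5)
There are two genuine gaps here. First, you invoke Pyber's theorem (Theorem~\ref{thm:odd}) to produce the base colorings of $G_u$ and $G_v$, but that theorem is stated --- and is only true --- for \emph{simple} graphs, whereas $G_u$ and $G_v$ are in general loopless multigraphs (this is the whole reason Lemma~\ref{prop:odd} is needed; the right-hand graph of Figure~\ref{fig:oddmult} is a multigraph requiring $6$ colors). So $\varphi_u$ and $\varphi_v$ are not available to you as a black box. Second, and more seriously, the stitching step does not go through, and you essentially concede this yourself. Coloring the two sides first and only afterwards distributing the bridge edges runs into the following obstruction: if $\varphi_u$ happens to give $u$ odd degree in all four classes, then every $m_c$ must be even, hence $\sum_c m_c$ is even, and no assignment exists when $k$ is odd. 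Permuting the palette of $\varphi_v$ is powerless here, since it does not change the degree profile of $u$ under $\varphi_u$; your claim that the total parity ``matches $k\bmod 2$ automatically'' is asserted rather than proved and is false in general. Your degenerate case has the same flaw: for $k$ odd you propose giving the bridge ``a fresh color class'', which need not exist if all four colors already appear at the attachment vertex with odd degree. Repairing this requires controlling how the side-colorings behave at $u$ and $v$, which Pyber's theorem as a black box does not provide.

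The paper's proof avoids both problems by never separating the bridge from the sides: it colors $G_u'=G[V(G_u)\cup\{v\}]$, which already contains the $k$ parallel edges and in which $v$ is a leaf of a spanning tree, via the explicit spanning-tree/forest construction underlying Pyber's theorem (pair up the even-degree vertices by tree paths, two colors for the resulting forest by Lemma~\ref{lem:stars}, one color for the remaining odd subgraph, and a fourth color for the leftover parallel edges at $v$). This construction works verbatim for multigraphs and, crucially, forces the bridge edges into a canonical pattern: all $k$ in one class if $k$ is odd, and one edge in one class plus $k-1$ in another if $k$ is even. The same is done for $G_v'$; since both sides color the bridge in the same pattern, a permutation of one palette makes them agree on the bridge, and the union of the two colorings is an odd edge coloring of $G$ with at most $4$ colors. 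To salvage your outline you would need a multigraph version of Pyber's bound in which the colors received at both endpoints of the $k$-bridge can be prescribed or at least restricted --- which is precisely what the paper's constructive argument delivers.
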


\begin{proof}	
	Suppose, for a contradiction, that $G$ is a loopless multigraph with a $k$-bridge having the minimum number of edges 
	with $\chi_o'(G) \ge 5$. It immediately follows that $G$ is connected.
	
	We construct an odd edge coloring of $G$ with at most $4$ colors in the following way. Let $u$ and $v$ be the end-vertices of the $k$-bridge 
	in $G$ and let $G_u$ and $G_v$ be the two components of $G$ with the edges of the $k$-bridge removed. Note that one or both components may
	consist of only one vertex $u$ or $v$, respectively. We will separately	color the graphs $G_u' = G[V(G_u) \cup \set{v}]$ and $G_v' = G[V(G_v) \cup \set{u}]$ by
	at most $4$ colors and then show that the obtained colorings induce an odd edge coloring of $G$ with at most $4$ colors.
	
	In the sequel, we color $G_u'$ and the analogous procedure may be used to color $G_v'$. Let $T$ be a spanning tree of $G_u'$. 
	Note that the vertex $v$ is a leaf in $T$. Consider two cases regarding the number of vertices $n$ of $G_u'$. If $n$ is even, then 
	there is also an even number of vertices of even degree $v_1,v_2,\dots,v_{2\ell}$ in $G_u'$. Let $P_i$ be a path between $v_{2i-1}$ and 
	$v_{2i}$ in $T$ for $i \in \set{1,2,\dots,\ell}$. The disjoint union of all paths $P_i$ is a forest 
	$F = P_1 \oplus P_2 \oplus \dots \oplus P_\ell$ contained in $T$. By Lemma~\ref{lem:stars},
	every forest admits an odd edge coloring by at most $2$ colors, say $1$ and $2$. Observe also that the graph $G_u' - F$ is an odd graph, 
	which we can color by color $3$. Thus, we infer that $G_u'$ admits an odd edge coloring by at most $3$ colors.
	
	Assume now that $n$ is odd. The graph $G_u' - v$ is a connected graph with an even number of vertices, hence $T - v$ contains a forest $F$ such that 
	the graph $G_u' - v - F$ is an odd graph, which we color by color $3$. If the degree of $v$ is even in $G_u'$, we add the edge of $T$ incident to $v$ 
	to the forest $F$. As in the previous case, we can color $F$ by at most $2$ colors, $1$ and $2$. It remains to consider the edges incident to $v$ 
	that are not colored yet. 
	Recall that all the edges incident to $v$ in $G_u'$ are incident to $u$. The number of such edges is odd, so we color them by color $4$,
	obtaining an odd edge coloring of $G_u'$ with at most $4$ colors. 
	
	Notice that in both graphs, $G_u'$ and $G_v'$, we colored the $k$ edges between $u$ and $v$ similarly. If $k$ is odd, all the edges admit the same color,
	and in case when $k$ is even, one edge is colored by one color and all the rest by some other color. Hence, there exists a permutation of the colors of $G_v'$
	such that the edges between $u$ and $v$ receive the same colors in both graphs. Notice that both colorings together induce an odd edge coloring of $G$ 
	by at most $4$ colors which establishes the lemma.
\end{proof}
We will make use of the following observation presented in~\cite{CzaJenKarSot12}.
\begin{proposition}
	\label{prop:odd2}
	Let $G$ be a loopless multigraph with an odd number of edges between any pair of adjacent vertices. Then, 
	$$
		\chi_o'(G) \le 4\,.
	$$
\end{proposition}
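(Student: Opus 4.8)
The plan is to reduce to Lemma~\ref{prop:odd} by contracting the multigraph in a way that creates a bridge. Let $G$ be a loopless multigraph with an odd number of edges between every pair of adjacent vertices. The underlying simple graph $\underline{G}$ is obtained by replacing each bundle of parallel edges by a single edge. By Pyber's Theorem~\ref{thm:odd}, $\underline{G}$ admits an odd edge coloring with at most $4$ colors; let $\varphi$ be such a coloring. Now transfer $\varphi$ back to $G$: for each pair of adjacent vertices $u,v$ joined by $m_{uv}$ parallel edges (with $m_{uv}$ odd), color all $m_{uv}$ of them with the single color $\varphi(uv)$. I claim this is an odd edge coloring of $G$. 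Indeed, fix a color $c$ and a vertex $w$. The edges of color $c$ incident to $w$ in $G$ are exactly the bundles over those neighbors $u$ with $\varphi(uw)=c$, and the number of such edges is $\sum_{u:\,\varphi(uw)=c} m_{uw}$. Since each $m_{uw}$ is odd, this sum has the same parity as the number of summands, which is precisely the degree of $w$ in the color-$c$ subgraph of $\underline{G}$ — and that degree is odd or zero because $\varphi$ is an odd edge coloring of $\underline{G}$. Hence every color class in $G$ induces an odd subgraph.

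The one subtlety to address is that Pyber's theorem as quoted is stated for simple graphs, so we must be sure that the reduction to $\underline{G}$ is exactly what is needed; the parity computation above is the crux, and it works precisely because \emph{all} multiplicities are odd — a single bundle of even size between two vertices would break the parity-counting argument and force us back to something like Lemma~\ref{prop:odd} or a worst case such as the right graph of Figure~\ref{fig:oddmult}. So the hypothesis ``odd number of edges between any pair of adjacent vertices'' is used in an essential way, and in fact this is the only place it is used.

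I expect no real obstacle here: the main (and only) step is the parity bookkeeping at each vertex, and there are no case distinctions on $k$-bridges or forest decompositions as in Lemma~\ref{prop:odd}. The proof is therefore short — invoke Theorem~\ref{thm:odd} on $\underline{G}$, pull the coloring back by assigning each parallel bundle its single underlying color, and verify the odd-degree condition by the summation-of-odd-multiplicities argument above.
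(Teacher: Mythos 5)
Your proof is correct. The paper itself gives no proof of this proposition --- it is quoted as an observation from the cited reference \cite{CzaJenKarSot12} --- and your argument (apply Pyber's Theorem~\ref{thm:odd} to the underlying simple graph, give every edge of a parallel bundle the color of its underlying edge, and note that a sum of odd multiplicities has the parity of the number of summands) is precisely the standard reduction; the parity bookkeeping at each vertex is sound, and you correctly identify that the odd-multiplicity hypothesis is used exactly once and is essential.
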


Now, we prove Theorem~\ref{thm:main} in a similar manner as in~\cite{CzaJenKarSot12}.
\begin{proof}[Proof of Theorem~\ref{thm:main}.]
	Let $G = (V,E,F)$ be a $2$-edge connected plane graph. We will find an FPE-coloring of $G$ with $\total$ colors in two phases. 
	First, let $\varphi$ be a quasi-facially-odd edge coloring of $G$ with $4$ colors, which exists by Lemma~\ref{lem:facodd}. 
	In the second phase, we granulate the coloring $\varphi$ in such a way that every color appears odd or zero times on any facial walk in $G$. 
	
	Let $C$ be the set of $4$ colors used in $\varphi$. For every color $c \in C$ let $G_c^*$ be the graph with the vertex set 
	$V(G_c^*) = V(G^*)$ and the edge set $E(G_c^*)$ being a subset of the edges $E(G^*)$, where $G^*$ is the dual of $G$.
	In particular, the edge $e^*$ of $G^*$ is in $G_c^*$ if the color $c$ was assigned to the edge $e$ associated to $e^*$.
	Since $G$ is bridgeless, its dual $G^*$ is loopless and so is $G_c^*$, for any color $c$. Moreover, any two adjacent vertices of $G_c^*$ are
	connected by an odd number of edges, except for the pairs of vertices representing the faces incident to $C_5$-blocks in $G$, which 
	are connected with two edges. Notice that the two edges form a $2$-bridge in $G_c^*$, which separates the 
	component in the interior of a $C_5$-block from the component in the exterior. 
	
	By Proposition~\ref{prop:odd2} and Lemma~\ref{prop:odd}, there exists an odd edge coloring $\omega$ of the edges of $G_c^*$ by at most $4$ colors. 
	Now, we replace the color $c$ on the edge $e$ with the color $(c, i)$, where $i = \omega(e^*)$. The odd or zero degree 
	of every vertex in $G_c^*$ assures that every color $(c,i)$ appears odd number of times or not at all on the boundary of every face of $G$. Hence, 
	after the procedure described above is performed on all $4$ colors of $C$, we obtain an FPE-coloring of $G$ with at most $\total$ colors.
\end{proof}

\section{Conclusion}

Notice that our bound holds for graphs embedded on other surfaces if no face is incident to itself. Such a case
results in a multigraph with loops when defining the dual $G_c^*$ restricted to one color $c$. Whenever there is a component with
one vertex incident to several loops in a graph, it cannot be partitioned into edge-disjoint subgraphs. 

In the introduction we mention that there exist graphs that need at least $10$ colors for an FPE-coloring, hence there remains a gap
of $6$ colors between the upper and lower bound of the facial parity chromatic index of $2$-edge connected plane graphs. 
In the proof we use a combination
of facially-odd edge coloring and partition the edges of every color class into odd subgraphs using $4$ colors for each color.
However, the fourth color in the odd edge coloring appears only on the edges incident to one vertex. Therefore, we believe that the argument 
could be modified in such a way that the fourth color could be omitted if the first phase of the coloring is carefully performed.

\bibliographystyle{acm}
{\small
	\bibliography{MainBase}

\begin{thebibliography}{1}

\bibitem{Cza12}
{\sc Czap, J.}
\newblock Facial parity edge coloring of outerplane graphs.
\newblock {\em Ars Math. Contemp. 5\/} (2012), 285--289.

\bibitem{CzaJenKar11}
{\sc Czap, J., Jendrol', S., and Kardo\v{s}, F.}
\newblock Facial parity edge colouring.
\newblock {\em Ars Math. Contemp. 4\/} (2011), 255--269.

\bibitem{CzaJenKarSot12}
{\sc Czap, J., Jendrol', S., Kardo\v{s}, F., and Sot\'{a}k, R.}
\newblock Facial parity edge colouring of plane pseudographs.
\newblock {\em Discrete Math. 312\/} (2012), 2735--2740.

\bibitem{Pyb91}
{\sc Pyber, L.}
\newblock Covering the edges of a graph by ...
\newblock {\em Graphs and Numbers, Colloquia Mathematica Societatis J\'{a}nos
  Bolyai 60\/} (1991), 583--610.

\end{thebibliography}
}

\end{document}